\def\dist{\operatorname{dist}}
\def\er{\mathbb R}
\def\N{\mathbb N}
\def\C{\mathbb C}
\def\eps{\varepsilon}
\def\hd{\hat{\operatorname{d}}}
\def\wk{\operatorname{wk}}
\def\wck{\operatorname{wck}}
\def\ca{\operatorname{ca}}
\def\uc{\operatorname{uc}}
\def\sa{_{\operatorname{sa}}}
\def\q#1{(#1$_q$)}
\def\qo#1{(#1$_q$)$_\omega$}
\def\qos#1{(#1$_q$)$_\omega^*$}
\def\clust{\operatorname{clust}}
\def\sgn{\operatorname{sgn}}
\def\dws{{\delta_{w^*}}}
\def\trn#1{{\left\vert\kern-0.3ex\left\vert\kern-0.3ex\left\vert\hskip 0,1 mm #1\hskip 0,1 mm\right\vert\kern-0.3ex\right\vert\kern-0.3ex\right\vert}}
\newtheorem{theorem}{Theorem}[section]
\newtheorem*{lemma*}{Lemma}
\newtheorem{corollary}[theorem]{Corollary}
\theoremstyle{definition}
\newtheorem*{remark}{Remark}
\newtheorem*{definition}{Definition}
\newtheorem*{claim}{Claim}
\begin{document}

\title{C*-algebras have a quantitative version of Pe\l czy\' nski's property (V)}
\author{Hana Kruli\v sov\'a}

\address{Department of Mathematical Analysis \\
Faculty of Mathematics and Physics\\ Charles University in Prague\\
Sokolovsk\'{a} 83, 186 \ 75\\Praha 8, Czech Republic}

\email{krulisova@karlin.mff.cuni.cz}

\subjclass[2010]{46B04,46L05,47B10}
\keywords{Pe\l czy\' nski's property (V); $C^*$-algebra; Grothendieck property}

\thanks{The research was supported by the Grant No. 142213/B-MAT/MFF of the Grant Agency of the Charles University in Prague and by the Research grant GA\v{C}R P201/12/0290.}

\begin{abstract}
A Banach space $X$ has Pe\l czy\' nski's property (V) if for every Banach space $Y$ every unconditionally converging operator $T\colon X\to Y$ is weakly compact. H. Pfitzner proved that $C^*$-algebras have Pe\l czy\' nski's property (V). In the preprint \cite{qpel} the author explores possible quantifications of the property (V) and shows that $C(K)$ spaces for a compact Hausdorff space $K$ enjoy a quantitative version of the property (V). In this paper we generalize this result by quantifying Pfitzner's theorem. Moreover, we prove that in dual Banach spaces a quantitative version of the property (V) implies a quantitative version of the Grothendieck property.
\end{abstract}

\maketitle

%%%%%%%%%%%%%%%%%%%%%%%%%%%%%%%%%%%%%%%%%%%%%%%%%%%%%%%%%%%%%%%%%%%%%%%%%%%%%%%%
%%%%%%%%%%%%%%%%%%%%%%%%%%%%%%%%%%%%%%%%%%%%%%%%%%%%%%%%%%%%%%%%%%%%%%%%%%%%%%%%
\section{Introduction}
%%%%%%%%%%%%%%%%%%%%%%%%%%%%%%%%%%%%%%%%%%%%%%%%%%%%%%%%%%%%%%%%%%%%%%%%%%%%%%%%
%%%%%%%%%%%%%%%%%%%%%%%%%%%%%%%%%%%%%%%%%%%%%%%%%%%%%%%%%%%%%%%%%%%%%%%%%%%%%%%%
In 1994, H. Pfitzner proved that $C^*$-algebras have Pe\l czy\' nski's property (V) (see \cite{pfitzner}). The aim of this paper is to prove a quantitative version of Pfitzner's result. In this way we continue the study of quantitative versions of Pe\l czy\' nski's property (V) presented in the preprint \cite{qpel}.

Section \ref{preliminaries} summarizes all essential definitions and basic facts contained mostly in the preprint \cite{qpel}. In Section \ref{vylepseny-behrends} we slightly improve Behrends's quantitative version of Rosenthal's $\ell^1$--theorem \cite[Section 3]{behrends}, which we use to prove the main theorem in Section \ref{main}. Section \ref{groth} is devoted to the relationship of quantitative versions of Pe\l czy\' nski's property (V) and the Grothendieck property in dual Banach spaces.

%%%%%%%%%%%%%%%%%%%%%%%%%%%%%%%%%%%%%%%%%%%%%%%%%%%%%%%%%%%%%%%%%%%%%%%%%%%%%%%%
%%%%%%%%%%%%%%%%%%%%%%%%%%%%%%%%%%%%%%%%%%%%%%%%%%%%%%%%%%%%%%%%%%%%%%%%%%%%%%%%
\section{Preliminaries}
\label{preliminaries}
%%%%%%%%%%%%%%%%%%%%%%%%%%%%%%%%%%%%%%%%%%%%%%%%%%%%%%%%%%%%%%%%%%%%%%%%%%%%%%%%
%%%%%%%%%%%%%%%%%%%%%%%%%%%%%%%%%%%%%%%%%%%%%%%%%%%%%%%%%%%%%%%%%%%%%%%%%%%%%%%%
We follow the notation of \cite{qpel} with one exception. Because we deal also with $C^*$\mbox{-}algebras, we write $X'$ (instead of $X^*$) for a dual to a Banach space $X$, since the $^*$ in $C^*$\mbox{-}algebras is already reserved for the involution. All Banach spaces are considered either real or complex, unless stated otherwise. The closed unit ball of a Banach space $X$ is denoted by $B_X$.

\subsection{Pe\l czy\' nski's property (V) and its quantification}
Let us recall some essential definitions and facts (explained in more detail in \cite{qpel} with many comments). A series $\sum_{n=1}^\infty x_n$ in a Banach space $X$ is said to be
\begin{itemize}
\item \emph{unconditionally convergent}\/ if the series $\sum_{n=1}^\infty {t_n}x_n$ converges whenever $(t_n)$ is a bounded sequence of scalars,
\item \begin{sloppypar}\emph{weakly unconditionally Cauchy} (\emph{wuC}) if for all $x'\in X'$ the series $\sum_{n=1}^\infty |x'(x_n)|$ converges.\end{sloppypar}
\end{itemize}
A bounded linear operator $T\colon X\to Y$ between Banach spaces $X$ and $Y$ is called \emph{unconditionally converging} if $\sum_{n} Tx_n$ is an unconditionally convergent series in $Y$ whenever $\sum_{n} x_n$ is a weakly unconditionally Cauchy series in $X$. It is not difficult to show that $T$ is unconditionally converging if and only if for every series $\sum_{n} x_n$ in $X$ with
$$\sup_{x'\in B_{X'}} \sum_{n=1}^\infty |x'(x_n)| < \infty$$
the series $\sum_{n} Tx_n$ converges.
We say that a Banach space $X$ has \emph{Pe\l czy\' nski's property (V)} if for every Banach space $Y$ every unconditionally converging operator $T\colon X\to Y$ is weakly compact.

To quantify the property (V) means to replace the implication
\begin{equation}
\label{impl}
T \text{ is unconditionally converging } \Rightarrow\ T \text{ is weakly compact}
\end{equation}
by an inequality
$$\begin{aligned} &\text{measure of weak non-compactness of } T \\
&\hskip 3 cm \leq C\cdot\text{measure of } T \text{ not being unconditionally converging},
\end{aligned}$$
where $C$ is some positive constant depending only on $X$, and the two measures are positive numbers for each operator $T$ and are equal to zero if and only if $T$ is weakly compact or unconditionally converging, respectively. This inequality is a~strengthening of the original implication \eqref{impl}.

For this purpose we use the following quantities. For a bounded sequence $(x_n)$ in a Banach space $X$ we define
$$\ca\big((x_n)\big)=\inf_{n\in\N} \sup \{\|x_k-x_l\|:\ k,l\in\N,\ k,l\geq n\}.$$
It is a measure of non-Cauchyness of a sequence $(x_n)$, hence in Banach spaces it measures non-convergence.
Let $T\colon X \to Y$ be a bounded linear operator between Banach spaces $X$ and $Y$. We set
$$\uc(T) = \sup\left\{\ca\left(\bigg(\sum_{i=1}^n T x_i\bigg)_n\right):\ (x_n)\subset X,\ \sup_{x'\in B_{X'}}\sum_{n=1}^\infty |x'(x_n)|\leq 1\right\}.$$
Then $\uc(T)$ measures how far is the operator $T$ from being unconditionally converging.

Let $A$ be a bounded subset of a Banach space $X$. The de Blasi measure of weak non-compactness of the set $A$ is defined by
$$\omega(A)=\inf\{\hd(A,K): \emptyset\not=K\subset X \text{ is weakly compact}\},$$
where
$$\hd(A,K) = \sup\{\dist(a,K):\ a\in A\}.$$
De Blasi has proved that $\omega(A)=0$ if and only if $A$ is relatively weakly compact (see \cite{deblasi}).
Other quantities which measure relative weak non-compactness are for example
$$
\begin{aligned}
\gamma(A) &= \sup\{|\lim_n\lim_m x'_m(x_n) - \lim_m\lim_n x'_m(x_n) |:\ (x_n) \text{ is a sequence in } A,\\
&\hskip 35 mm (x'_m) \text{ is a sequence in } B_{X'}, \text{ and the limits exist}\}
\end{aligned}
$$
or
$$\wck_X(A) = \sup\{\dist(\clust_{(X'',w^*)}(x_n),X):\ (x_n) \text{ is a sequence in } A\},$$
where $\clust_{(X'',w^*)}(x_n)$ stands for the set of all $w^*$-cluster points of the sequence $(x_n)$ in $X''$.
The quantities $\gamma(A)$ and $\wck_X(A)$ are equivalent by \cite[Theorem 2.3]{ac} in the following sense:
\begin{equation}
\label{equiv}
\wck_X(A) \leq \gamma(A) \leq 2 \wck_X(A).
\end{equation}
However, the quantity $\omega(A)$ is not equivalent to the other two (see \cite[Corollary 3.4]{ac}). We have only
\begin{equation}
\wck_X(A) \leq \omega(A)
\end{equation}
by \cite[Theorem 2.3]{ac}.

For measuring weak non-compactness of a bounded linear operator $T\colon X \to Y$ between Banach spaces $X$ and $Y$ we use the quantities $\omega(T(B_X))$, $\gamma(T(B_X))$, and $\wck_Y(T(B_X))$, which we denote simply by $\omega(T)$, $\gamma(T)$, and $\wk_Y(T)$.

We say that a Banach space $X$ has a quantitative version of Pe\l czy\' nski's property (V) -- we denote it by \q{V} -- if there is a~constant $C>0$ such that for every Banach space $Y$ and every operator $T\colon X\to Y$
\begin{equation}
\label{quantpel}
\gamma(T)\leq C\cdot \uc(T).
\end{equation}
If it is possible to replace $\gamma(T)$ in \eqref{quantpel} with $\omega(T)$, we say that $X$ has the property \qo{V}. If $\gamma(T)$ in \eqref{quantpel} is replaced by $\omega(T')$, where $T'\colon Y'\to X'$ denotes the dual operator to $T$, we say that $X$ has the property \qos{V}.

\begin{sloppypar}
In \cite[Proposition 3.2]{qpel} it is proved that a Banach space $X$ has the property \q{V} if and only if there exists a constant $C>0$ such that for each bounded subset $K$ of the dual space $X'$
$$\gamma(K) \leq C\cdot \eta(K),$$
where $$\eta(K)=\sup\Big\{\limsup_n \sup_{x'\in K} |x'(x_n)|:\,(x_n)\subset X,\,\sup_{x'\in B_{X'}}\sum_{n=1}^\infty |x'(x_n)|\leq 1\Big\}.$$
Using the above-described characterization we will prove in Section \ref{main} that \mbox{$C^*$-algebras} have the property \q{V}.
\end{sloppypar}

Note that the quantity $\eta$ is translation-invariant, that is,
\begin{equation}\label{eta-transl}
\eta(K) = \eta(K+z'),\quad K\subset X',\ z'\in X'.
\end{equation}
This follows from the fact that $(x_n)$ weakly null whenever $\sum x_n$ is a wuC series in $X$.

\subsection{Measures of weak and weak$^*$ non-Cauchyness of sequences in Banach spaces}
In sections \ref{main} and \ref{groth} we will use the following standard quantities, analogous to the quantity $\ca$, which measure how far is a sequence in a (dual) Banach space from being weakly (weak$^*$) Cauchy.

Let $X$ be a Banach space and let $(x_n)$ be a bounded sequence $X$. We set
$$\delta(x_n) = \sup_{x'\in B_{X'}} \lim_{n\to\infty} \sup_{k,l\geq n} |x'(x_k') - x'(x_l')|.$$
This quantity is a measure of weak non-Cauchyness of the sequence $(x_n)$.
Furthermore, let us set
$$\tilde{\delta}(x_n) = \inf\left\{\delta(x_{n_k}):\,(x_{n_k}) \text{ is a subsequence of } (x_n)\right\}.$$
It measures how close can subsequences of $(x_n)$ be to be weakly Cauchy.

If $(x_n')$ is a bounded sequence in $X'$, we set
$$\dws(x_n') = \sup_{x\in B_{X}} \lim_{n\to\infty} \sup_{k,l\geq n} |x_k'(x) - x_l'(x)|.$$
The last quantity is a measure of weak$^*$ non-Cauchyness of the sequence $(x_n')$. The quantity $\delta(x_n)$ equals $0$ if and only if the sequence $(x_n)$ is weakly Cauchy. Analogously, $\dws(x_n')=0$ if and only if $(x_n')$ is weak$^*$ Cauchy. If $\tilde{\delta}(x_n)=0$, it it not clear whether $(x_n)$ admits a weakly Cauchy subsequence.

\subsection{Selfadjoint elements and selfadjoint functionals}

Let $A$ be a $C^*$-algebra. Let us denote by $A\sa$ the selfadjoint elements of $A$, that is $A\sa = \{a\in A:\, a=a^*\}$. Then $A\sa$ is a real Banach space and $A=A\sa + i A\sa$. If $f$ is a bounded linear functional on $A$, $f^*$ is the functional defined by $f^*(x) = \overline{f(x^*)}$, $x\in A$. Let $(A')\sa$ denote the set $\{f\in A':\, f=f^*\}$ of selfadjoint functionals on $A$. Then $(A')\sa$ is a real Banach space, and is isometrically isomorphic to $(A\sa)'$. We will write $A'\sa$ for both these spaces. Every functional $x'\in A'$ can be decomposed as $x'=f+ig$ where $f,g\in A'\sa$. It suffices to set $f=(x'+(x')^*)/2$, $g=(x'-(x')^*)/(2i)$.

%%%%%%%%%%%%%%%%%%%%%%%%%%%%%%%%%%%%%%%%%%%%%%%%%%%%%%%%%%%%%%%%%%%%%%%%%%%%%%%%
%%%%%%%%%%%%%%%%%%%%%%%%%%%%%%%%%%%%%%%%%%%%%%%%%%%%%%%%%%%%%%%%%%%%%%%%%%%%%%%%
\section{A quantitative version of Rosenthal's $\ell^1$--theorem}
\label{vylepseny-behrends}
%%%%%%%%%%%%%%%%%%%%%%%%%%%%%%%%%%%%%%%%%%%%%%%%%%%%%%%%%%%%%%%%%%%%%%%%%%%%%%%%
%%%%%%%%%%%%%%%%%%%%%%%%%%%%%%%%%%%%%%%%%%%%%%%%%%%%%%%%%%%%%%%%%%%%%%%%%%%%%%%%
For proving the main result we need the quantitative version of Rosenthal's $\ell^1$--theorem proved by E. Behrends in \cite[Section 3]{behrends}. In this section we revise his theorem, because it turns out that one of the estimates there can be easily improved. We will then use this improved version.

Let us remind Behrends's definition \cite[3.1]{behrends}.

\begin{definition}
Let $(x_n)$ be a bounded sequence in a Banach space $X$ and $\eps>0$. We say that $(x_n)$ admits $\eps$--$\ell^1$--blocks if for every infinite $M\subset \N$ there are scalars $a_1,\dots,a_r$ with $\sum|a_r|=1$ and $i_1,\dots,i_r$ in $M$ such that $\left\|\sum a_\rho x_{i_\rho}\right\| \leq \eps$.
\end{definition}

The revised version of the quantitative Rosenthal's $\ell^1$--theorem for complex Banach spaces is the following.

\begin{theorem}
\label{rosenthal-complex}
Let $X$ be a complex Banach space $X$ and $\eps>0$. Let $(x_n)$ be a~sequence in $X$ which admits $\eps$--$\ell^1$--blocks. Then there is a subsequence $(x_{n_k})$ of $(x_n)$ such that for every $x'\in X'$ with $\|x'\|=1$ the diameter of the set of cluster points of the sequence $(x'(x_{n_k}))_k$ is at most $\pi\eps$.
\end{theorem}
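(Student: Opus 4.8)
The plan is to follow Behrends's strategy for the real case and then reduce the complex case to it, being careful about the constant that appears in the reduction. The real-valued version of the theorem asserts essentially the same statement but with a constant $2$ in place of $\pi$: if $(x_n)$ is a sequence in a \emph{real} Banach space admitting $\eps$--$\ell^1$--blocks, then some subsequence $(x_{n_k})$ has the property that for every norm-one $x'$ the cluster-point set of $(x'(x_{n_k}))_k$ has diameter at most $2\eps$. This is the estimate in \cite[Section 3]{behrends} that can be sharpened; I would first state and use this real version (either citing it or reproving it via the standard Rosenthal-style combinatorial extraction: iterating the $\eps$--$\ell^1$--block property along a diagonal argument to get a subsequence on which no functional can oscillate by more than $2\eps$ in the limit, since a large oscillation would let one build a convex block of small norm witnessing a functional of norm close to $1$ evaluating to something of modulus close to the oscillation, contradicting $\|x'\|=1$ together with the $\eps$ bound).

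Next I would pass to the complex case. Given a complex Banach space $X$ and a sequence $(x_n)$ admitting $\eps$--$\ell^1$--blocks, view $X$ as a real Banach space $X_\er$; the real dual of $X_\er$ is $\{\operatorname{Re} x' : x'\in X'\}$, and for $x'\in X'$ with $\|x'\|=1$ the real functional $\operatorname{Re} x'$ also has norm $1$. The hypothesis that $(x_n)$ admits $\eps$--$\ell^1$--blocks is a statement about the complex norm, so I need to check it transfers appropriately: the subtlety is that complex coefficients $a_\rho$ with $\sum|a_\rho|=1$ are being used, whereas the real version wants real coefficients. This is exactly where the factor $\pi$ (rather than $2$) enters. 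I would handle it by splitting each complex coefficient into a bounded number of real pieces after rotating: writing $a_\rho = |a_\rho| e^{i\theta_\rho}$, one can approximate the circle by finitely many directions and, on each infinite subset $M$, first thin out so that all the $\theta_\rho$ occurring are close to a common angle, reducing to real combinations up to a controlled error; optimizing the covering of the unit circle by arcs yields the constant $\pi$ instead of $2$ (heuristically, $\sup$ over a semicircle of the real part picks up $\int$-type averaging, and $\pi/2 \cdot 2 = \pi$).

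More precisely, the cleanest route: for a norm-one complex functional $x'$ and real $t$, the rotated functional $e^{it}x'$ is again norm-one, and $\operatorname{Re}(e^{it}x'(x))$ ranges over $[-|x'(x)|,|x'(x)|]$ as $t$ varies. Apply the real theorem simultaneously to the real functionals $\operatorname{Re}(e^{it}x')$ — since there are uncountably many, one instead applies it once to get a single subsequence good for all real norm-one functionals at level $2\eps$, which in particular covers every $\operatorname{Re}(e^{it}x')$. Then for fixed $x'$, if $z_1,z_2$ are two cluster points of $(x'(x_{n_k}))$, choose $t$ so that $e^{it}(z_1-z_2)$ is real and nonnegative; then $|z_1-z_2| = \operatorname{Re}(e^{it}z_1) - \operatorname{Re}(e^{it}z_2)$, and these are cluster points of $(\operatorname{Re}(e^{it}x'(x_{n_k})))$, a real sequence governed by the real functional $\operatorname{Re}(e^{it}x')$ of norm $\le 1$ — so their distance is $\le 2\eps$. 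That only gives $2\eps$, not $\pi\eps$, which means the genuine improvement must come from a sharper \emph{real} estimate exploiting that we may average over the rotation parameter: the diameter bound for the real part of a rotated functional, integrated over $t\in[0,2\pi)$, relates to the complex diameter with the better constant. The main obstacle, and the step I expect to require the most care, is precisely this constant-tracking in the real-to-complex reduction — making sure the $\eps$--$\ell^1$--block hypothesis is used at full strength (allowing complex block coefficients and the freedom to rotate) so that the averaging argument delivers $\pi\eps$ rather than the lossy $2\eps$, and verifying that the diagonal extraction producing the subsequence can be done uniformly over the continuum of functionals involved.
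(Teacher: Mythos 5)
Your plan never actually engages the difficulty that forces the constant $\pi$. The ``cleanest route'' you describe applies the real-case theorem to $X$ viewed as a real space, but the hypothesis of that theorem is that $(x_n)$ admits $\eps$--$\ell^1$--blocks with \emph{real} coefficients, and this does not follow from the stated hypothesis, which only provides complex coefficients $a_\rho$ with $\sum|a_\rho|=1$: a complex combination of small norm gives no real combination of small norm. This is precisely why the complex constant is worse than the real constant $2$, and it also explains the confusion at the end of your argument --- you worry that your reduction ``only gives $2\eps$, not $\pi\eps$,'' but $2\eps<\pi\eps$, so if the reduction were legitimate you would be done with a better constant; the fact that it cannot be legitimate is the hypothesis-transfer failure above, not a need for a ``sharper real estimate.'' Your proposed repair (thin the sequence so that the arguments $\theta_\rho$ of the coefficients cluster near a common angle) does not work as stated either: the coefficients, and hence their arguments, are produced anew for each infinite set $M$ by the $\eps$--$\ell^1$--block property, so they are attached to the chosen blocks rather than to indices, and passing to subsequences of $(x_n)$ gives no control over them. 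You are left with a heuristic (averaging over rotations of the circle) and no actual argument: no construction of the subsequence, and no derivation of a contradiction from a functional with large oscillation.

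The paper does not reduce to the real case at all. It runs Behrends's contradiction scheme directly in the complex setting: assuming the conclusion fails, it extracts (by Behrends's combinatorial lemma) a subsequence with two properties --- (i) for any disjoint finite index sets $C,D$ there are $z_0,w_0\in\C$ with $|w_0|\ge\pi\eps+\delta$ and a norm-one $x'$ with $x'(x_n)$ within $\tau$ of $z_0$ on $C$ and of $z_0+w_0$ on $D$; and (ii) an $\eps$--$\ell^1$--block $\sum a_\rho x_{i_\rho}$ whose coefficient sum satisfies $|\sum a_\rho|\le\tau$. The point where $\pi$ enters is Rudin's lemma: from $a_1,\dots,a_r$ one can choose a subset $D$ of indices with $\bigl|\sum_{\rho\in D}a_\rho\bigr|\ge\frac1\pi\sum_{\rho}|a_\rho|$ (this is the rigorous form of the rotation-averaging idea you gesture at, applied to the block coefficients rather than to functionals). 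Feeding this $D$ and its complement into (i) yields the chain of estimates ending in $\eps>\eps$. To salvage your write-up you would need to abandon the real-case reduction, prove or quote the extraction lemma giving (i) and (ii), and use Rudin's subset-selection lemma in the final estimate.
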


\begin{remark}
In the original Behrends' theorem \cite[Theorem 3.3]{behrends} there is a larger constant $8/\sqrt 2$ in place of $\pi$. A similar result with the better constant $\pi$ has been obtained (in a different way) by I.~Gasparis \cite{gasparis}.
\end{remark}

\begin{proof}[Sketch of the proof of Theorem \ref{rosenthal-complex}]
The proof is essentially the same as the original one. Suppose that the conclusion were not true. We can find $\delta>0$ such that the number
$$\sup_{x'\in S_{X'}} \left\{\text{diameter of the set of accumulation points of } (x'(x_{n_k}))_k \right\}$$
is greater than $\pi\eps + \delta$ for any subsequence $(x_{n_k})$ of $(x_n)$. Fix $\tau\in(0,1)$ such that $(2+\sup_n \|x_n\|)\tau < \frac{\delta}{\pi}$.

Similarly to the one in the proof of \cite[Theorem 3.3 (or 3.2)]{behrends} we can prove the following lemma.
\begin{lemma*}
The sequence $(x_n)$ admits a subsequence (without loss of generality still denoted by $(x_n)$) which satisfies the following conditions:
\begin{enumerate}[(i)]
\item Whenever $C$ and $D$ are disjoint finite subsets of\/ $\N$, there are $z_0, w_0 \in \C$ with $|w_0| \geq \pi\eps + \delta$ and $x'\in X'$ with $\|x'\|=1$ such that $|x'(x_n) - z_0| \leq \tau$ for $n\in C$ and $|x'(x_n) - (z_0+w_0)| \leq \tau$ for $n\in D$.
\item There are $i_1<\dots<i_r$ in\/ $\N$ and $a_1,\dots,a_r\in\C$ which satisfy
$$\sum_{\rho=1}^r |a_\rho|=1,\ \left|\sum_{\rho=1}^r a_\rho \right|\leq \tau,\ \left\|\sum_{\rho=1}^r a_\rho x_{i_\rho}\right\|\leq \eps. $$
\end{enumerate}
\end{lemma*}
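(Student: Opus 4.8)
The plan is to reproduce, in the present complex and quantitative setting, the extraction that Behrends carries out in \cite[Section 3]{behrends} (following Rosenthal). Throughout we work under the standing hypothesis fixed just before the statement, namely that \emph{every} subsequence of $(x_n)$ carries a norm-one functional whose value-sequence has cluster set of diameter \emph{strictly} greater than $\pi\eps+\delta$; the goal is to produce one subsequence satisfying both (i) and (ii).

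For (i) I would first discretize. Put $R=\sup_n\|x_n\|$, fix a small $\eta\in(0,\tau)$ and a finite net $F$ of the closed disc $\{z\in\C:|z|\le R\}$ that is fine relative to $\eta$, and for $z\in F$, $n\in\N$ set $A_n^{z}=\{x'\in S_{X'}:|x'(x_n)-z|\le\eta\}$; call a pair $(z,z')$ of points of $F$ \emph{admissible} if $|z-z'|\ge\pi\eps+\delta$. A subsequence satisfies (i) as soon as, for each pair of disjoint finite sets $C,D$, there is an admissible pair $(z,z')$ and a functional in $\bigcap_{n\in C}A_n^{z}\cap\bigcap_{n\in D}A_n^{z'}$ — one then takes $z_0=z$ and $w_0=z'-z$. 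So it suffices to produce a subsequence for which, for at least one admissible pair of centres, the sequence of pairs $\bigl(A_n^{z},A_n^{z'}\bigr)_n$ is Boolean independent; this is where Rosenthal's combinatorial lemma on sequences of pairs of subsets is invoked, and it is the main obstacle. I would argue by contradiction: if no subsequence of $(x_n)$ had this property, then by the dichotomy of \cite[Section 3]{behrends}, applied to each of the finitely many pair-sequences and followed by passage to a common further subsequence $(y_k)$, one would obtain that no norm-one functional $x'$ lies in $A_k^{z}$ for infinitely many $k$ and in $A_k^{z'}$ for infinitely many $k$ at once, for any admissible pair $(z,z')$. Since an admissible pair has $|z-z'|\ge\pi\eps+\delta$, the value-sequence $(x'(y_k))_k$ then cannot possess two cluster points at distance greater than $\pi\eps+\delta$ (round them into $F$ and use that the net is fine and $\eta$ small), so its cluster set has diameter at most $\pi\eps+\delta$; as this holds for every norm-one $x'$, it contradicts the standing hypothesis, and the required subsequence exists. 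The only new bookkeeping is to make $F$ fine enough that two cluster points at distance $>\pi\eps+\delta$ still round to an admissible pair, which is possible because the standing hypothesis is a strict inequality (one may shrink $\delta$ slightly before fixing $\tau$, $\eta$ and $F$).

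For (ii) I would pass to the subsequence just obtained — which, relabelled $(x_n)$, still admits $\eps$--$\ell^1$--blocks, that property being inherited by subsequences — and apply the $\eps$--$\ell^1$--block property repeatedly, each time to the infinite tail of indices left untouched by the earlier applications. This produces blocks $b_1,\dots,b_N$ with pairwise disjoint supports, each of the form $b_j=\sum_\rho a_\rho^{(j)}x_{i_\rho^{(j)}}$ with $\sum_\rho|a_\rho^{(j)}|=1$ and $\|b_j\|\le\eps$, where $N$ exceeds the cardinality of a $\tfrac{\tau}{2}$-net of the closed unit disc of $\C$. The scalars $s_j=\sum_\rho a_\rho^{(j)}$ all lie in that disc, so by the pigeonhole principle there are $j\ne k$ with $|s_j-s_k|\le\tau$. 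Relabelling the disjoint support of $\tfrac12(b_j-b_k)$ in increasing order as $i_1<\dots<i_r$, its coefficients $a_1,\dots,a_r\in\C$ satisfy $\sum_\rho|a_\rho|=1$ (disjointness rules out cancellation of moduli), $\bigl|\sum_\rho a_\rho\bigr|=\tfrac12|s_j-s_k|\le\tau$, and $\bigl\|\sum_\rho a_\rho x_{i_\rho}\bigr\|=\tfrac12\|b_j-b_k\|\le\eps$, which is exactly (ii).
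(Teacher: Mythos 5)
Your argument follows the same Rosenthal--Behrends extraction that the paper invokes by reference to \cite[Section 3]{behrends}: Rosenthal's dichotomy for sequences of pairs of disjoint subsets of $S_{X'}$ gives (i), and disjointly supported $\eps$--$\ell^1$--blocks together with a pigeonhole on the coefficient sums give (ii); both steps are sound, including the inheritance of the block property by subsequences and the bookkeeping $\sum_\rho|a_\rho|=1$, $\left|\sum_\rho a_\rho\right|\le\tau$, $\left\|\sum_\rho a_\rho x_{i_\rho}\right\|\le\eps$ for $\tfrac12(b_j-b_k)$. The one caveat is the point you flag yourself: with a fixed finite net no mesh is fine enough to round two cluster points at distance $>\pi\eps+\delta$ to net points at distance $\ge\pi\eps+\delta$, because the excess over $\pi\eps+\delta$ depends on the subsequence and on the functional, so as written the extraction yields (i) only with $|w_0|\ge\pi\eps+\delta'$ for a slightly smaller $\delta'$. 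Your remedy (shrink $\delta$ before $\tau$, $\eta$ and $F$ are fixed) is legitimate, since in the surrounding proof $\delta$ is merely some positive number for which the standing hypothesis holds and $\tau$ is chosen afterwards, so the final contradiction $\eps\ge\eps+\delta'/\pi-(2+\sup_n\|x_n\|)\tau>\eps$ is unaffected. If you want the lemma verbatim with the given $\delta$, replace the finite net by a countable dense family of pairs $(z,w)$ with $|w|\ge\pi\eps+\delta$ and extract diagonally; the rounding margin may then depend on the particular pair of cluster points, which is all that is needed.
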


Finally, the time has come for the modification. By \cite[Lemma 6.3]{rudin} we find $D \subset \{1,\dots,r\}$ such that
$$\left|\sum_{\rho\in D} a_\rho\right| \geq \frac{1}{\pi}\sum_{\rho=1}^r |a_\rho| = \frac{1}{\pi}.$$
Set $C=\{1,\dots,r\} \setminus D$. For these sets $C$ and $D$ we find $z_0$, $w_0$, and $x'$ from (i) of the lemma. It follows that
$$\begin{aligned}
\eps &\geq \left\|\sum_{\rho=1}^r a_\rho x_{i_\rho} \right\| \geq \left|\sum_{\rho=1}^r a_\rho x'(x_{i_\rho}) \right|
= \left|\sum_{\rho\in C} a_\rho x'(x_{i_\rho}) + \sum_{\rho\in D} a_\rho x'(x_{i_\rho}) \right| \\
&\geq \left|\sum_{\rho\in C} a_\rho z_0 + \sum_{\rho\in D} a_\rho (z_0+w_0) \right| - \tau\sum_{\rho=1}^r |a_\rho|
= \left|\sum_{\rho\in D} a_\rho w_0 + \sum_{\rho=1}^r a_\rho z_0 \right| - \tau \\
&\geq |w_0|\left|\sum_{\rho\in D} a_\rho\right| - |z_0|\left| \sum_{\rho=1}^r a_\rho \right| - \tau
\geq \frac{|w_0|}{\pi} - |z_0|\tau - \tau
\geq \frac{\pi\eps + \delta}{\pi} - (1+|z_0|)\tau \\
&= \eps + \frac{\delta}{\pi} - (1+|z_0|)\tau \geq \eps + \frac{\delta}{\pi} - (2+\sup_n \|x_n\|)\tau > \eps,
\end{aligned}$$
which is a contradiction. %(note that the numbers $|z_0|$ are bounded by $\sup_n \|x_n\| + \tau < \sup_n \|x_n\| + 1$).
\end{proof}

%%%%%%%%%%%%%%%%%%%%%%%%%%%%%%%%%%%%%%%%%%%%%%%%%%%%%%%%%%%%%%%%%%%%%%%%%%%%%%%%
%%%%%%%%%%%%%%%%%%%%%%%%%%%%%%%%%%%%%%%%%%%%%%%%%%%%%%%%%%%%%%%%%%%%%%%%%%%%%%%%
\section{Main theorem}
\label{main}
%%%%%%%%%%%%%%%%%%%%%%%%%%%%%%%%%%%%%%%%%%%%%%%%%%%%%%%%%%%%%%%%%%%%%%%%%%%%%%%%
%%%%%%%%%%%%%%%%%%%%%%%%%%%%%%%%%%%%%%%%%%%%%%%%%%%%%%%%%%%%%%%%%%%%%%%%%%%%%%%%
This section is devoted to our main result -- a quantitative version of Pfitzner's theorem (Theorem \ref{qpf} below). We also prove a ``real version'' of this theorem (Theorem \ref{sa}).

\begin{theorem}
\label{qpf}
Let $A$ be a $C^*$-algebra. Then for every bounded $K\subset A'$
\begin{equation}
\label{qp}
\wck_{A'}(K) \leq \pi \cdot \eta(K).
\end{equation}
Therefore $A$ has the property \q{V}.
\end{theorem}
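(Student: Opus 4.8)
The plan is to prove the inequality $\wck_{A'}(K) \leq \pi \cdot \eta(K)$ directly, and then deduce the property \q{V} from the characterization recalled from \cite[Proposition 3.2]{qpel} together with the equivalence \eqref{equiv} between $\gamma$ and $\wck_X$. So the real work is the estimate \eqref{qp}; once we have it, for any bounded $K \subset A'$ we get $\gamma(K) \leq 2\wck_{A'}(K) \leq 2\pi\,\eta(K)$, which by the cited characterization gives \q{V} with constant $C = 2\pi$.

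To prove \eqref{qp}, fix a bounded sequence $(f_n)$ in $K$; I want to bound $\dist(\clust_{(A'',w^*)}(f_n), A')$ by $\pi\,\eta(K)$. The strategy mirrors Pfitzner's qualitative argument, made quantitative. First I would reduce to selfadjoint functionals: using the decomposition $x' = f + ig$ with $f,g \in A'\sa$ from the Preliminaries, split each $f_n$ into real and imaginary parts; since the relevant quantities behave reasonably under this $2$-dimensional decomposition, it suffices to treat a bounded sequence in $A'\sa \cong (A\sa)'$. The key structural input is Pfitzner's dichotomy (obtained via Rosenthal's $\ell^1$-theorem applied to a suitable sequence of elements in the algebra): either a subsequence of $(f_n)$ is weakly Cauchy in $A'$ — in which case, since $A'$ is weakly sequentially complete for $C^*$-algebras is not automatic, but the relevant fact is that weak-Cauchy subsequences of $(f_n)$ have their $w^*$-cluster set reduce to a point of $A'$, contributing $0$ — or else $(f_n)$ has a subsequence that, tested against a bounded sequence of elements $(a_k)$ in $A$ built to be "$\ell^1$-like", exhibits oscillation; and this oscillation is exactly what $\eta(K)$ controls, because such a sequence $(a_k)$ can be normalized to satisfy $\sup_{x'\in B_{A'}}\sum |x'(a_k)| \leq 1$ (or nearly so). The quantitative Rosenthal theorem, Theorem \ref{rosenthal-complex}, enters precisely here: from a sequence admitting $\eps$-$\ell^1$-blocks we extract a subsequence along which every norm-one functional has cluster-point set of diameter $\leq \pi\eps$, and this $\pi$ is the source of the constant in \eqref{qp}.

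Concretely, I would argue as follows. Given $(f_n) \subset K$ and a $w^*$-cluster point $F \in A''$ of $(f_n)$, I need a point of $A'$ within $\pi\eta(K)$ of $F$. Passing to a subsequence, use Pfitzner's construction: if no subsequence is weakly Cauchy, one obtains (after passing to a further subsequence, and reducing to the selfadjoint case) a normalized sequence of positive elements $(a_k)$ in $A$ with the $\ell^1$-block and small-sum properties, scaled so that its series is dominated in the sense required by the definition of $\eta$, whence $\limsup_k \sup_{x'\in K}|x'(a_k)| \leq \eta(K)$. On the other hand, the failure of weak-Cauchyness, quantified, forces the oscillation $\limsup_k |f_n(a_k) - f_m(a_k)|$ along the subsequence to be bounded below by (a multiple of) $\dist(F, A')$; matching the two bounds through Theorem \ref{rosenthal-complex} — whose role is to convert the $\ell^1$-block parameter $\eps$ into a control $\pi\eps$ on the spread of cluster points of the scalar sequences $f_n(a_k)$ — yields $\dist(F, A') \leq \pi\,\eta(K)$. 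Taking the supremum over cluster points and over sequences in $K$ gives \eqref{qp}.

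The main obstacle I expect is the bookkeeping in the reduction to the selfadjoint case combined with the precise extraction of the $\ell^1$-block sequence: one must make sure that the element sequence $(a_k)$ produced from Pfitzner's argument can genuinely be normalized so that $\sup_{x'\in B_{A'}}\sum_n |x'(a_n)| \leq 1$ (this is where the $C^*$-structure — in particular the behaviour of states and the order structure on $A\sa$ — is used, via the fact that wuC series correspond exactly to the boundedness of these functional sums), and that the constant coming through this normalization and through the real/imaginary splitting does not degrade the factor $\pi$. Getting the clean constant $\pi$, rather than some larger multiple, is the delicate point, and it hinges on applying Theorem \ref{rosenthal-complex} to the scalar sequences at exactly the right stage and on the translation-invariance \eqref{eta-transl} of $\eta$ to center things appropriately. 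The weak-Cauchy alternative is, by contrast, soft: such subsequences contribute nothing to $\wck_{A'}$, so they can be discarded at the outset.
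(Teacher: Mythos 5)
Your overall architecture matches the paper's: reduce \q{V} to the inequality \eqref{qp} via $\gamma\leq 2\wck_{A'}$ and \cite[Proposition 3.2]{qpel}, decompose into selfadjoint functionals, use the quantitative Rosenthal theorem (Theorem \ref{rosenthal-complex}) to produce an $\ell^1$-estimate with constant $\lambda/\pi$, feed this into Pfitzner's construction of pairwise orthogonal selfadjoint elements of $B_A$, and use disjointness (Gelfand representation of the commutative subalgebra they generate) to get $\sup_{x'\in B_{A'}}\sum_k|x'(x_k)|\leq 1$, so that $\eta(K)$ catches the lower bound. But there is a genuine gap at the very first quantitative step, and it is the heart of the matter: you assert that ``the failure of weak-Cauchyness, quantified, forces the oscillation \dots to be bounded below by (a multiple of) $\dist(F,A')$,'' i.e.\ that a sequence $(x_n')\subset K$ whose $w^*$-cluster points (in $A'''$, not $A''$) lie at distance $>\lambda$ from $A'$ must satisfy a lower bound on $\tilde{\delta}(x_n')$. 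This implication is false in general Banach spaces: in $c_0$ the partial sums of the unit vector basis are weakly Cauchy ($\tilde{\delta}=0$) while their unique $w^*$-cluster point in $\ell^\infty$ is at distance $1$ from $c_0$. The paper gets this step from a specific structural fact plus a specific quantitative theorem: $A'$ is L-embedded (it is the predual of a von Neumann algebra, \cite[Theorem III.2.14]{takesaki}), and for L-embedded spaces \cite[Theorem 1]{kps} gives $\tilde{\delta}(x_n')\geq 2\dist\big(\clust_{(A''',w^*)}(x_n'),A'\big)$. Your proposal never identifies this tool, and your treatment of the weakly Cauchy alternative as ``soft'' misses that in a quantitative proof there is no dichotomy to exploit --- one needs precisely this inequality, not weak sequential completeness (which, contrary to your aside, does hold for $A'$ but is not enough).

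The same omission is why you cannot resolve the point you yourself flag as delicate, namely that the real/imaginary splitting should not degrade the constant $\pi$. In the paper the factor $2$ from \cite[Theorem 1]{kps} is exactly what absorbs the splitting: from $\tilde{\delta}(x_n')>2\lambda$ one extracts a subsequence with $\tilde{\delta}(f_{n_k})>\lambda$ or $\tilde{\delta}(g_{n_k})>\lambda$ (using $\delta(x_{n}')\leq\delta(f_{n})+\delta(g_{n})$ along subsequences), and then Theorem \ref{rosenthal-complex} is applied in contrapositive form to the sequence of \emph{functionals} $(f_n)$ in the Banach space $A'$ (tested against $A''$), not to scalar sequences or to elements of the algebra: $\tilde{\delta}(f_n)>\lambda$ means no $\tfrac{\lambda}{\pi}$--$\ell^1$--blocks, hence a subsequence with lower $\ell^1$-estimate $\lambda/\pi$, which after normalization is condition (36) of \cite{pfitzner} and yields the orthogonal elements with $|f_{n_k}(x_k)|>(1-\eps)^2\lambda/\pi$. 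So the clean constant $\pi$ is not obtained by ``centering via \eqref{eta-transl}'' but by the exact bookkeeping $2\lambda \mapsto \lambda \mapsto \lambda/\pi$; without the L-embedding input your outline does not produce any inequality of the form \eqref{qp}.
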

\begin{proof}
The quantities $\gamma(K)$ and $\wck_{A'}(K)$ are equivalent by \cite[Theorem 2.3]{ac}, more specifically, the inequality \eqref{qp} implies $\gamma(K) \leq 2\pi \cdot \eta(K)$. If this holds for each bounded $K\subset A'$, Proposition \cite[3.2]{qpel} mentioned also in Section \ref{preliminaries} gives that $A$ has the property \q{V}. Let us show the inequality \eqref{qp}.

Let $K\subset A'$ be bounded. The case $\wck_{A'}(K) = 0$ is trivial. Suppose that $\wck_{A'}(K) > 0$ and fix an arbitrary
$\lambda\in(0,\wck_{A'}(K))$. By the definition of the quantity $\wck_{A'}(K)$ we find a sequence $(x_n')$ in $K$ such that
$$\dist\left(\clust_{(A''',w^*)}(x_n'),A'\right)>\lambda.$$
Since every dual of a $C^*$-algebra is a predual of a von Neumann algebra, we deduce from \cite[Theorem III.2.14]{takesaki} (see also \cite[Example IV.1.1(b)]{mideals}) that $A'$ is L-embedded -- it means that $A'$ is complemented in $A'''$ by a projection $P$ satisfying
$$\|x'''\| = \|Px'''\| + \|x'''-Px'''\|, \quad x'''\in A'''.$$
Consequently, from \cite[Theorem 1]{kps} we have
$$\begin{aligned}
\tilde{\delta}(x_n') &= \inf\{\delta(x_{n_k}'):\,(x_{n_k}') \text{ is a subsequence of } (x_k')\} \\
&\geq 2\dist\left(\clust_{(A''',w^*)}(x_n'),A'\right) > 2\lambda.
\end{aligned}$$

Fix an arbitrary $\eps>0$. We now prove the following claim.
\begin{claim}
There is a sequence of self-adjoint elements $(x_k)$ in $B_A$ satisfying $x_i x_j = 0$, $i,j\in\N$, $i\not=j$, and a subsequence $(x_{n_k}')$ of the sequence $(x_n')$ such that
$$\left|x_{n_k}'(x_k)\right| > (1-\eps)^2 \frac{\lambda}{\pi}, \quad k\in\N.$$
\end{claim}
\begin{proof}\renewcommand{\qedsymbol}{}
Each $x_n'$ is canonically decomposed in the following way: $x_n'=f_n+ i g_n$, where $f_n,g_n\in A'$ are selfadjoint functionals. It suffices to find $(x_k)$ and $(x_{n_k}')$ such that
$$\left|f_{n_k}(x_k)\right| > (1-\eps)^2 \frac{\lambda}{\pi} \qquad \text{or} \qquad \left|g_{n_k}(x_k)\right| > (1-\eps)^2 \frac{\lambda}{\pi}.$$
Indeed, since selfadjoint functionals attain real values on selfadjoint elements of $A$, we have
$$\begin{aligned}
\left|x_{n_k}'(x_k)\right| &= \left|f_{n_k}(x_k) + i g_{n_k}(x_k)\right|
\geq
\begin{cases}
\big|\operatorname{Re}(f_{n_k}(x_k) + i g_{n_k}(x_k))\big| = |f_{n_k}(x_k)|\\
\big|\operatorname{Im}(f_{n_k}(x_k) + i g_{n_k}(x_k))\big| = |g_{n_k}(x_k)|
\end{cases}.
\end{aligned}$$

We begin by proving that there is a strictly increasing sequence of indices $(n_k)$ such that $\tilde{\delta}(f_{n_k})>\lambda$ or $\tilde{\delta}(g_{n_k})>\lambda$. If $\tilde{\delta}(f_n)>\lambda$, the proof is over, so suppose that $\tilde{\delta}(f_n)\leq\lambda$. Let us find $\tau>0$ satisfying $\tilde{\delta}(x_n')>2\lambda+2\tau$. By the definition of $\tilde{\delta}(f_n)$ there is a subsequence $(f_{n_k})$ of the sequence $(f_n)$ with $\delta(f_{n_k}) < \lambda + \tau$. We claim that the corresponding subsequence $(g_{n_k})$ of $(g_n)$ satisfies $\tilde{\delta}(g_{n_k}) > \lambda$. To obtain a contradiction, suppose that $\tilde{\delta}(g_{n_k}) \leq \lambda$. Using the definition of $\tilde{\delta}(g_{n_k})$ we find a~strictly increasing sequence of indices $(k_l)$ such that $\delta(g_{n_{k_l}}) < \lambda + \tau$. Then
$$\begin{aligned}
\delta(x_{n_{k_l}}') &= \delta(f_{n_{k_l}} + i g_{n_{k_l}}) \\
&= \sup_{x''\in B_{A''}} \lim_{l\to\infty} \sup_{p,q\geq l} \big|x''(f_{n_{k_p}} + i g_{n_{k_p}}) - x''(f_{n_{k_q}} + i g_{n_{k_q}})\big| \\
&\leq \sup_{x''\in B_{A''}} \lim_{l\to\infty} \sup_{p,q\geq l} \left(\big|x''(f_{n_{k_p}}) - x''(f_{n_{k_q}})\big| + \big|x''(g_{n_{k_p}}) - x''(g_{n_{k_q}})\big|\right) \\
&\leq \sup_{x''\in B_{A''}} \lim_{l\to\infty} \sup_{p,q\geq l} \big|x''(f_{n_{k_p}}) - x''(f_{n_{k_q}})\big| \\
&\qquad+ \sup_{x''\in B_{A''}} \lim_{l\to\infty} \sup_{p,q\geq l} \big|x''(g_{n_{k_p}}) - x''(g_{n_{k_q}})\big| \\
&= \delta(f_{n_{k_l}}) + \delta(g_{n_{k_l}}) < \lambda + \tau + \lambda + \tau = 2\lambda + 2\tau,
\end{aligned}$$
which contradicts the fact that $\tilde{\delta}(x_n') > 2\lambda + 2\tau$.

Without loss of generality we may assume that we have found a subsequence $(f_{n_k})$ of the sequence $(f_n)$ with $\tilde{\delta}(f_{n_k})>\lambda$ and such that $(f_{n_k}) = (f_n)$. By passing to a~further subsequence we can also ensure that
$$\frac{\inf_{n\in\N} \|f_n\|}{\sup_{n\in\N} \|f_n\|} > 1-\eps.$$
Indeed, the sequence $(f_n)$ is bounded, hence we can find its subsequence $(f_{n_k})$ such that the $\lim_{k\to\infty} \|f_{n_k}\|$ exists. This limit is nonzero, because otherwise we would have $\tilde{\delta}(f_n)=0$. We thus obtain the desired subsequence by omitting finitely many members of $(f_{n_k})$.

The inequality $\tilde{\delta}(f_n)>\lambda$ says that for every subsequence $(f_{n_k})$ of $(f_n)$ there is some $x''\in A''$ with $\|x''\|=1$ such that the diameter of the set of accumulation points of the sequence $(x''(f_{n_k}))_k$ is greater than $\lambda$. By Theorem \ref{rosenthal-complex} the sequence $(f_n)$ does not admit $\frac{\lambda}{\pi}$--$\ell^1$--blocks, i.e. there is an infinite $M\subset\N$ such that whenever $a_1,\dots,a_r\in\C$ satisfy $\sum_{i=1}^r |a_i| = 1$, and $n_1<\dots<n_r$ are indices in $M$, we have $\big\|\sum_{i=1}^r a_i f_{n_i} \big\| > \frac{\lambda}{\pi}$. Hence there is a subsequence $(f_{n_k})$ of $(f_n)$ such that for each nonzero $(\alpha_k)\in\ell^1$ and $N\in\N$ large enough
$$\left\|\sum_{k=1}^N \frac{\alpha_k}{\sum_{k=1}^N |\alpha_k|} f_{n_k}\right\| > \frac{\lambda}{\pi}.$$
By letting $N\to\infty$ we obtain
$$\frac{\lambda}{\pi} \sum_{k=1}^\infty |\alpha_k| \leq \left\|\sum_{k=1}^\infty \alpha_k f_{n_k}\right\|.$$
Therefore we have for each $(\alpha_k)\in\ell^1$
$$\frac{\lambda}{\pi \sup_{k\in\N} \|f_{n_k}\|} \sum_{k=1}^\infty |\alpha_k|
\leq \frac{\lambda}{\pi} \sum_{k=1}^\infty \frac{|\alpha_k|}{\|f_{n_k}\|}
\leq \left\|\sum_{k=1}^\infty \alpha_k \frac{f_{n_k}}{\|f_{n_k}\|}\right\|
\leq \sum_{k=1}^\infty |\alpha_k|.$$

Let us set
$$r=\frac{\lambda}{\pi \sup_{k\in\N} \|f_{n_k}\|} \qquad \text{and} \qquad \theta = (1-\eps)\, r \inf_{k\in\N} \|f_{n_k}\|.$$
Then
$$\theta = (1-\eps)\frac{\lambda}{\pi}\, \frac{\inf_{k\in\N} \|f_{n_k}\|}{\sup_{k\in\N} \|f_{n_k}\|}
\geq (1-\eps) \frac{\lambda}{\pi}\, \frac{\inf_{n\in\N} \|f_n\|}{\sup_{n\in\N} \|f_n\|}
\geq (1-\eps)^2 \frac{\lambda}{\pi}.$$
Without loss of generality we can assume that $(f_{n_k})=(f_n)$.
Then $\big(\frac{f_n}{\|f_n\|}\big)_n$ is a~basic sequence consisting of selfadjoint elements which satisfies
$$r \sum_{k=1}^\infty |\alpha_k| \leq \left\|\sum_{k=1}^\infty \alpha_k \frac{f_k}{\|f_k\|}\right\| \leq \sum_{k=1}^\infty |\alpha_k|, \quad (a_k)\in\ell^1,$$
that is (36) of \cite{pfitzner} (where $a_k'=f_k$).
By Pfitzner's proof of \cite[Theorem 1]{pfitzner} we obtain a~sequence $(x_k)$ in $A$ and a subsequence $(f_{n_k})$ of $(f_n)$ for which (35) of \cite{pfitzner} is valid (where $a_{n_k}'=f_{n_k}$), i.e. $x_k$ are selfadjoint elements in $B_A$ such that $x_i x_j = 0$, $i,j\in\N$, $i\not=j$, and $\left|f_{n_k}(x_k)\right| > \theta \geq (1-\eps)^2 \frac{\lambda}{\pi}$, $k\in\N$. This completes the proof of the claim.
\end{proof}
Let $(x_k)$ and $(x_{n_k}')$ be sequences obtained by the claim. Since $|x_{n_k}'(x_k)| > (1-\eps)^2 \frac{\lambda}{\pi}$, $k\in\N$, we have
$$\limsup_{k\to\infty} \sup_{x'\in K} |x'(x_k)| \geq (1-\eps)^2 \frac{\lambda}{\pi}.$$
But $\sum x_k$ is a wuC series in $A$ satisfying $\sup_{x'\in B_{A'}} \sum |x'(x_k)| \leq 1$.
Indeed, all $x_k$ are contained in a commutative subalgebra $B$ of $A$, which can be identified with the space $C_0(\Omega)$ for some $\Omega$ by the Gelfand representatiton. Then $x_k$, $k\in\N$, are real continuous functions on $\Omega$ with $\|x_k\| = \sup_{\xi\in\Omega}|x_k(\xi)| \leq 1$ and $\{x_i\not=\nobreak0\}\cap\{x_j\not=\nobreak0\}=\nobreak\emptyset$, $i\not=j$. Let $x'\in A'$, and let us set $\mu = x'\restriction_B \in B' = C_0(\Omega)' = \mathcal{M}(\Omega)$. For each $N\in\N$ we get
$$\begin{aligned}
\sum_{k=1}^N |x'(x_k)| &= \sum_{k=1}^N |\mu(x_k)| = \sum_{k=1}^N \left|\int_{\Omega} x_k\, \mathrm{d}\mu \right|
\leq \sum_{k=1}^N \int_{\{x_k\not=0\}} |x_k|\, \mathrm{d}|\mu| \\
&\leq \int_{\Omega} 1\, \mathrm{d}|\mu| = \|\mu\| \leq \|x'\|.
\end{aligned}$$
Therefore $\sup_{x'\in B_{A'}} \sum_{k=1}^\infty |x'(x_k)| \leq 1$.

We thus obtain $\eta(K) \geq (1-\eps)^2 \frac{\lambda}{\pi}$. Since $\eps>0$ and $\lambda<\wck_{A'}(K)$ were chosen arbitrarily, it follows that $\eta(K) \geq \frac{1}{\pi} \wck_{A'}(K)$, which completes the proof.
\end{proof}

\begin{remark}
\begin{sloppypar}
It is not clear whether $C^*$-algebras have also the property \qos{V}. From \cite[Theorem 4.1]{qpel} it follows that the answer is affirmative for commutative \mbox{$C^*$-algebras}. In fact we do not know any example of a Banach space with the property \q{V} but not \qos{V}. Regarding the property \qo{V}, we know from \cite[Proposition 4.3]{qpel} that some (commutative) $C^*$-algebras enjoy this property and some do not.
\end{sloppypar}
\end{remark}

The following theorem is a kind of ``real version'' of Theorem \ref{qpf}.

\begin{theorem}
\label{sa}
Let $A$ be a $C^*$-algebra. Then the space $A_{\sa}$ has the property \q{V}, more precisely, for every bounded $K\subset A'_{\sa}$
\begin{equation}
\label{qp-sa}
\wck_{A'}(K) \leq \eta(K).
\end{equation}
\end{theorem}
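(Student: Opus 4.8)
The plan is to mirror the proof of Theorem~\ref{qpf} but to work throughout inside the real Banach space $A\sa$ and its dual $A'\sa$, which is exactly what eliminates the factor coming from the complex decomposition $x' = f + ig$. So let $K\subset A'\sa$ be bounded; the case $\wck_{A'}(K)=0$ is trivial, so fix $\lambda\in(0,\wck_{A'}(K))$ and, by definition of $\wck_{A'}$, a sequence $(x_n')$ in $K$ with $\dist(\clust_{(A''',w^*)}(x_n'),A')>\lambda$. First I would observe that since each $x_n'$ is selfadjoint, I am really dealing with a sequence in $(A\sa)'$, and I can run the same L-embedding argument: $A'\sa=(A\sa)'$ is still L-embedded in its bidual (one either invokes that $A'$ is L-embedded in $A'''$ and notes the real part inherits the decomposition, or cites the relevant result directly), so by \cite[Theorem~1]{kps} one gets $\tilde\delta(x_n')\geq 2\dist(\clust_{(A''',w^*)}(x_n'),A')>2\lambda$. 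Here $\tilde\delta$ is computed with respect to $A\sa$, i.e.\ testing against $B_{A\sa}$.

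Next, fix $\eps>0$. The key simplification is that there is no need for the ``$f$ or $g$'' dichotomy from the Claim in Theorem~\ref{qpf}: the sequence $(x_n')$ already consists of selfadjoint functionals with $\tilde\delta(x_n')>2\lambda>\lambda$, so I can directly pass to a subsequence on which $\|\cdot\|$ converges to a nonzero limit and arrange $\inf_n\|x_n'\|/\sup_n\|x_n'\|>1-\eps$. The inequality $\tilde\delta(x_n')>\lambda$ (equivalently $>2\lambda$) says every subsequence has some $x''\in B_{A''}$ for which the cluster set of $(x''(x_{n_k}'))_k$ has diameter $>\lambda$, so by Theorem~\ref{rosenthal-complex} the sequence $(x_n')$ does not admit $\tfrac{\lambda}{\pi}$--$\ell^1$--blocks. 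Wait --- here is where I should be careful: Theorem~\ref{rosenthal-complex} as stated is for complex Banach spaces and gives the constant $\pi$. For the real version I would instead use the real form of the quantitative Rosenthal theorem, where the corresponding constant is $1$ (this is the elementary real case of \cite[Section~3]{behrends}: a real interval of length $>\ell$ cannot be covered by subinterval-splitting, so one gets, via \cite[Lemma~6.3]{rudin} replaced by the trivial real splitting $|\sum_{\rho\in D}a_\rho|\geq\tfrac12$ for one of two sign classes --- actually the sharp real constant is that a subsequence failing $\ell$--$\ell^1$--blocks forces $\|\sum\alpha_k x_{n_k}'\|\geq\ell\sum|\alpha_k|$ with $\ell$ rather than $\ell/\pi$). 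Thus I obtain a subsequence $(x_{n_k}')$, still written $(x_k')$, and a basic sequence $(x_k'/\|x_k'\|)$ of selfadjoint functionals with
$$
r\sum_k|\alpha_k|\leq\Bigl\|\sum_k\alpha_k\frac{x_k'}{\|x_k'\|}\Bigr\|\leq\sum_k|\alpha_k|,\qquad r=\frac{\lambda}{\sup_k\|x_k'\|}.
$$

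From this point the argument is verbatim the tail of the proof of Theorem~\ref{qpf}, with $\lambda/\pi$ replaced by $\lambda$: set $\theta=(1-\eps)\,r\inf_k\|x_k'\|\geq(1-\eps)^2\lambda$, feed the normalized selfadjoint functionals into Pfitzner's construction (equations (35)--(36) of \cite{pfitzner}) to extract selfadjoint elements $(x_k)$ in $B_A$ with $x_ix_j=0$ for $i\neq j$ and $|x_{n_k}'(x_k)|>\theta\geq(1-\eps)^2\lambda$, observe via the Gelfand representation of the commutative subalgebra they generate that $\sup_{x'\in B_{A'}}\sum_k|x'(x_k)|\leq1$ (the $C_0(\Omega)$ computation is unchanged and does not care that we are in $A\sa$), and conclude $\eta(K)\geq(1-\eps)^2\lambda$; since the $x_k$ are selfadjoint, $\sup_{x'\in K}|x'(x_k)|$ with $K\subset A'\sa$ is the relevant quantity, so $\eta$ here is the $\eta$ for $A\sa$. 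Letting $\eps\to0$ and $\lambda\uparrow\wck_{A'}(K)$ gives \eqref{qp-sa}. The main obstacle --- and the only real content beyond bookkeeping --- is establishing the real-scalar version of the quantitative Rosenthal theorem with constant $1$ in place of $\pi$; everything else (L-embeddedness of $A'\sa$, the Pfitzner extraction, the Gelfand estimate) transfers unchanged because selfadjointness is preserved at every stage.
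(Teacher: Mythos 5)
Your overall strategy is exactly the paper's: run the proof of Theorem \ref{qpf} inside $A\sa$, drop the decomposition $x'=f+ig$ (which is what costs a factor $2$ in the complex case), and keep the L-embeddedness of $A'\sa$, the \cite{kps} lower bound, Pfitzner's extraction (35)--(36), and the Gelfand/$C_0(\Omega)$ estimate; all of that transfers as you say. The gap is in the one step you flagged yourself: the real form of the quantitative Rosenthal theorem does \emph{not} have constant $1$. The sign-splitting you propose only yields $\left|\sum_{\rho\in D}a_\rho\right|\geq\frac{1-\tau}{2}$, so Behrends's argument gives: if a real sequence admits $\eps$--$\ell^1$--blocks, then some subsequence has all cluster sets of diameter at most $2\eps$ (this is \cite[Theorem 3.2]{behrends}), and the constant $2$ is sharp. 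For instance, in $\ell^\infty(\{0,1\}^{\N})$ the sequence $x_n(\sigma)=\sigma_n$ admits $\frac{1}{2}$--$\ell^1$--blocks (coefficients $\frac{1}{2},-\frac{1}{2}$ at any two indices), yet for every subsequence some coordinate functional has cluster set $\{0,1\}$, of diameter $1$. Consequently, from ``every subsequence has a functional with cluster diameter $>\lambda$'' you may only conclude that $(x_n')$ does not admit $\frac{\lambda}{2}$--$\ell^1$--blocks, which gives $r=\lambda/(2\sup_k\|x_k'\|)$ and only $\wck_{A'}(K)\leq 2\,\eta(K)$, weaker than \eqref{qp-sa}.

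The repair is already in your hands, and it is what the paper does: you actually have $\tilde{\delta}(x_n')>2\lambda$, not merely $>\lambda$, so every subsequence carries a functional whose cluster set has diameter greater than $2\lambda$; combined with the correct constant-$2$ real theorem this shows $(x_n')$ does not admit $\lambda$--$\ell^1$--blocks, giving $r=\lambda/\sup_k\|x_k'\|$, $\theta\geq(1-\eps)^2\lambda$, and the stated constant $1$. In other words, do not discard the factor $2$ coming from the L-embedding/\cite[Theorem 1]{kps} step (your parenthetical ``equivalently $>\lambda$'' throws away exactly what you need): in the complex proof that factor is spent on choosing between $(f_n)$ and $(g_n)$, while in the selfadjoint case it is spent on the factor $2$ in the real Rosenthal theorem. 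The remaining ingredients of your argument (L-embeddedness of $A'\sa$ via $1$-complementation in $(A')_{\er}$, Pfitzner's construction applied directly to the selfadjoint functionals, and the wuC estimate) match the paper's proof and are fine.
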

\begin{proof}
The proof is analogous to the previous one, it suffices to use real versions of the theorems that have allowed us to prove Theorem \ref{qpf}.
Let us sketch it briefly.

Consider a bounded set $K\subset A'\sa$ with $\wck_{A'\sa}(K) > 0$ and an arbitrary $\lambda\in(0,\wck_{A'\sa}(K))$. We find $(f_n)$ in $K$ such that
$$\dist\left(\clust_{((A'\sa)'',w^*)}(f_n),A'\sa\right) > \lambda.$$
Since $A'$ is L-embedded, the real version of $A'$ (let us denote it by $(A')_{\er}$) is also L-embedded. But $(A')\sa$ is a 1-complemented subspace of $(A')_{\er}$ and is therefore L-embedded by \cite[Proposition IV.1.5]{mideals}. We thus get
$$\tilde{\delta}(f_n) > 2\lambda$$
from \cite[Theorem 1]{kps}.
Let us fix $\eps>0$. By passing to a subsequence we arrange that
$$\frac{\inf_{n\in\N} \|f_n\|}{\sup_{n\in\N} \|f_n\|} > 1-\eps.$$
By the real version of the quantitative Rosenthal's $\ell^1$--theorem \cite[Theorem 3.2]{behrends} the sequence $(f_n)$ admits $\lambda$--$\ell^1$--blocks, which yields a subsequence $(f_{n_k})$ of the sequence $(f_n)$ that for every $(\alpha_n) \in \ell^1$ satisfies
$$\frac{\lambda}{\sup_{k\in\N} \|f_{n_k}\|} \sum_{k=1}^\infty |\alpha_k|
\leq \left\|\sum_{k=1}^\infty \alpha_k \frac{f_{n_k}}{\|f_{n_k}\|}\right\|
\leq \sum_{k=1}^\infty |\alpha_k|.$$
Then we proceed exactly as in the proof of Theorem \ref{qpf} to obtain the desired conclusion.
\end{proof}

%%%%%%%%%%%%%%%%%%%%%%%%%%%%%%%%%%%%%%%%%%%%%%%%%%%%%%%%%%%%%%%%%%%%%%%%%%%%%%%%
%%%%%%%%%%%%%%%%%%%%%%%%%%%%%%%%%%%%%%%%%%%%%%%%%%%%%%%%%%%%%%%%%%%%%%%%%%%%%%%%
\section{Relation to the Grothendieck property}
\label{groth}
%%%%%%%%%%%%%%%%%%%%%%%%%%%%%%%%%%%%%%%%%%%%%%%%%%%%%%%%%%%%%%%%%%%%%%%%%%%%%%%%
%%%%%%%%%%%%%%%%%%%%%%%%%%%%%%%%%%%%%%%%%%%%%%%%%%%%%%%%%%%%%%%%%%%%%%%%%%%%%%%%
Let us remind that a Banach space $X$ has the Grothendieck property if every weak$^*$ convergent sequence in its dual is weakly convergent. It is well known that for dual Banach spaces the property (V) implies the Grothendieck property. In this section we prove that this implication holds even for suitable quantitative versions of these properties.

One possible quantification of the Grothendieck property has already been studied in \cite{qgroth} and \cite{lechner}. Let us remind the definition: Let $c>0$. A Banach space $X$ is \emph{$c$-Grothendieck}\/ if
\begin{equation}
\label{c-groth}
\delta(x_n') \leq c \cdot \dws(x_n')
\end{equation}
whenever $(x_n')$ is a bounded sequence in $X'$.

A Banach space $X$ has the Grothendieck property if and only if for every sequence $(x_n')$ in $X'$ the following implication holds:
$$(x_n') \text{ is weak}^* \text{ Cauchy} \ \Rightarrow \ (x_n') \text{ is weakly Cauchy}.$$
The inequality \eqref{c-groth} quantifies this implication. But we can look at the Grothendieck property also in another way: $X$ has the Grothendieck property if and only if every sequence $(x_n')$ in $X'$ satisfies the implication
$$(x_n') \text{ is weak}^* \text{ Cauchy} \ \Rightarrow \ \{x_n':\,n\in\N\} \text{ is a relatively weakly compact set.}$$
If we replace this implication by an inequality
$$\wck_{X'}\big(\{x_n':\,n\in\N\}\big) \leq c\cdot \dws(x_n')$$
where $c>0$ is some constant not depending on $(x_n')$, we obtain another quantitative version of the Grothendieck property. We will prove that all dual Banach spaces with the property \q{V} have this kind of quantitative Grothendieck property (see Corollary \ref{qv-qgroth}). We do not know whether the latter quantitative Grothendieck property implies the former one (with a larger constant).

%%%%%%%%%%%%%%%%%%%%%%%%%%%%%%%%%%%%%%%%%%%%%%%%%%%%%%%%%%%%%%%%%%%%%%%%%%%%%%%%

\begin{theorem}
\label{eta-dws}
Let $X$ be a Banach space. Then for every bounded sequence $(x_n'')$ in $X''$
$$\eta\big(\{x_n'':\, n\in\N\}\big) \leq \tfrac12\dws(x_n'').$$
\end{theorem}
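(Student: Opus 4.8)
The plan is to argue by contradiction. Fix a bounded sequence $(x_n'')$ in $X''$, set $d:=\dws(x_n'')$, and suppose $\eta\big(\{x_n'':n\in\N\}\big)>\tfrac12 d$. Recalling that, for a subset of $X''=(X')'$, the quantity $\eta$ is computed with weakly unconditionally Cauchy series in $X'$, we get a number $\rho$ with $d<2\rho$ and a wuC series $\sum_m z_m$ in $X'$ with $\sup_{x''\in B_{X''}}\sum_m|x''(z_m)|\le 1$ and $\limsup_m\sup_n|x_n''(z_m)|>\rho$. The aim is to produce from this data a single element $x'\in B_{X'}$ whose orbit has tail oscillation $\lim_N\sup_{k,l\ge N}|x_k''(x')-x_l''(x')|>d$, contradicting $d=\dws(x_n'')$. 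The factor $\tfrac12$ will enter because the probe $x'$ will be built in the shape $\sum_l(\pm)v_l$, a two‑sided object whose oscillation is about twice what $\eta$ detects; that this is sharp is visible already from $X=\ell^1$, $z_m=e_m$, $x_n''=\widehat{e_n}\in\ell^\infty=(\ell^1)'$, where $\eta=\tfrac12\dws=1$.

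First I would reduce to a clean normal form by passing to subsequences. For each $m$ with $\sup_n|x_n''(z_m)|>\rho$ pick $n(m)$ with $|x_{n(m)}''(z_m)|>\rho$; since the $z_m$ are weakly null in $X'$ (being terms of a wuC series), $x_{n^*}''(z_m)\to0$ for each fixed $n^*$, so the indices $n(m)$ cannot be bounded, and after relabelling I may take increasing $m_j$ and $n_j:=n(m_j)$. Put $v_j:=z_{m_j}$ and $w_j:=x_{n_j}''$; then $(v_j)$ is wuC with constant $\le1$, $(w_j)$ is a subsequence of $(x_n'')$ (so $\dws(w_j)\le d$), and $|w_j(v_j)|>\rho$. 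Replacing $w_j$ by $w_j-\phi$ for a weak$^*$ cluster point $\phi$ of $(w_j)$, and discarding finitely many terms (using $\phi(v_j)\to0$), I may further assume $0$ lies in the weak$^*$ cluster set of $(w_j)$ while still $\operatorname{Re} w_j(v_j)>\rho$ (after a limiting phase rotation of the $v_j$, or just a constant‑sign subsequence in the real case); this translation does not change $\dws$. Then a greedy/diagonal selection produces an almost biorthogonal system: using $w_i(v_m)\to0$ as $m\to\infty$ for the entries below the diagonal, and using that $0$ lies in the weak$^*$ cluster set of $(w_j)$ — so $(0,\dots,0)$ is a cluster value of $\big(w_j(v_{t_1}),\dots,w_j(v_{t_{k-1}})\big)_j$ — for the entries above it, I thin out to get $|w_j(v_l)|<2^{-\max(j,l)}$ for all $j\ne l$, keeping $\operatorname{Re} w_j(v_j)>\rho$.

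With this in hand I would take $x':=\sigma(X',X)\text{-}\lim_T\sum_{l\le T}(-1)^{l+1}v_l$, which exists because $X'=X^*$ and the partial sums form a bounded $\sigma(X',X)$‑Cauchy sequence, and $\|x'\|\le1$ since every finite $\pm$‑combination of the $v_l$ has norm $\le1$. The ``formal'' values $\sigma_j:=\sum_l(-1)^{l+1}w_j(v_l)=(-1)^{j+1}w_j(v_j)+O(j\,2^{-j})$ satisfy $\operatorname{Re}\sigma_j>\rho-o(1)$ along odd $j$ and $\operatorname{Re}\sigma_j<-\rho+o(1)$ along even $j$, so $(\sigma_j)_j$ has cluster values with real part $\ge\rho$ and with real part $\le-\rho$, hence tail oscillation $\ge2\rho$. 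It therefore suffices to know that $w_j(x')$ differs from $\sigma_j$ by a sequence of vanishing tail oscillation: then $\dws(x_n'')\ge\dws(w_j)\ge 2\rho>d$, a contradiction, and $\eta\le\tfrac12\dws$ follows by letting $\rho\uparrow\eta$.

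The step I expect to be the real obstacle is exactly the last one. An element of $X''$ need not be $\sigma(X',X)$‑continuous, so one does not have $w_j(x')=\sigma_j$; the defect $e_j:=w_j(x')-\sigma_j=\lim_T w_j\big(x'-\sum_{l\le T}(-1)^{l+1}v_l\big)$ is a priori only bounded by $\|w_j\|$. The feature that should rescue the argument is that $e_j$ is invariant under finite changes of the sign pattern — if two sign sequences agree off a finite set, the corresponding probes differ by a genuine finite element of $X'$, on which each $w_j$ is linear — so $e_j$ is determined by the ``tail'' of the signs alone. Coupling this rigidity with the almost biorthogonality (which makes the formal tails $\sum_{l>T}(-1)^{l+1}w_j(v_l)$ negligible for $T\ge j$) and with the constraint $\dws(w_j)\le d$, one should be able to conclude that $(e_j)_j$ has tail oscillation $o(1)$ (in the model example $X=\ell^1$ it is literally constant in $j$). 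Pushing this estimate through for an arbitrary bounded $(x_n'')$ is the delicate heart of the proof; everything else is routine bookkeeping with $\eta$, $\dws$ and standard selection arguments, and the ``real version'' and the Grothendieck corollary then follow by the same template.
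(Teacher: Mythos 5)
Your overall route is the same as the paper's: extract from the $\eta$-witnessing wuC series and from $(x_n'')$ an almost biorthogonal pair (the paper gets $\sum_{m\neq k}|y_{n_k}''(y_{n_m}')|<\eps$ in one stroke from Simons' extraction lemma, where you hand-roll it via weak nullity of the wuC terms plus a translation by a weak$^*$ cluster point -- both fine), then test the sequence against the weak$^*$ sum $x'=w^*\text{-}\lim_N\sum_{k\le N}\alpha_k y_{n_k}'$ with alternating phases. The problem is that you stop exactly at the step where the proof has to close: you do not establish that $w_j(x')$ is close to the formal value $\sigma_j=\sum_l \alpha_l w_j(v_l)$, and your sketched rescue does not work as described. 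Writing $e_j=w_j(x')-\sigma_j=\lim_N w_j(x'-S_N)$, the invariance of $e_j$ under finite changes of the sign pattern says nothing about how $e_j$ varies with $j$, which is what you need; and the only way the constraint $\dws(w_j)\le d$ enters is by testing the fixed elements $R_N=x'-S_N\in B_{X'}$, which gives $\limsup_{j,j'}|w_j(R_N)-w_{j'}(R_N)|\le d$ for each fixed $N$ -- you cannot interchange this tail limsup in $j,j'$ with the limit in $N$ defining $e_j$. Moreover, even if such an interchange were granted, it would only bound the oscillation of $(e_j)$ by $d$, giving $2\rho-d\le d$, i.e. $\eta\le\dws$, and the factor $\tfrac12$ you are after would be lost; your hoped-for conclusion that the oscillation of $(e_j)$ is $o(1)$ has no mechanism behind it beyond the model example, where the defect happens to be a single singular functional evaluated at $x'$, constant in $j$. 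So as written the proposal does not prove the theorem.

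For comparison, the paper's proof at this juncture does not run any defect analysis at all: after the Simons extraction it expands $\operatorname{Re}\,y_{n_k}''(x')$ directly as $\alpha_k y_{n_k}''(y_{n_k}')$ plus the off-diagonal terms of the absolutely convergent series $\sum_m\alpha_m y_{n_k}''(y_{n_m}')$, estimates the off-diagonal part by $\eps$, and gets values $>\delta$ along even $k$ and $<-\delta$ along odd $k$, hence $\dws\ge2\delta$. In other words, the identity $y_{n_k}''(x')=\sum_m\alpha_m y_{n_k}''(y_{n_m}')$ -- precisely the point you flag as ``the real obstacle,'' since $y_{n_k}''\in X''$ is only guaranteed to pass through weak, not weak$^*$, limits of the partial sums -- is used there without further comment; the paper does not supply the additional argument you were (rightly) looking for, and your proposal, having correctly isolated the issue but not resolved it, remains incomplete at that decisive step.
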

\begin{proof}
Let $(x_n'')$ be a bounded sequence in $X''$. The case $\eta\big(\{x_n'':\, n\in\N\}\big)=0$ is trivial. Suppose that $\eta\big(\{x_n'':\, n\in\N\}\big)>0$ and fix $\delta\in\left(0,\eta\big(\{x_n'':\, n\in\N\}\big)\right)$. Let us find $\eps>0$ such that $\eta\big(\{x_n'':\, n\in\N\}\big)>\delta+\eps$. By the definition of the quantity $\eta$ we can find a wuC series $\sum_{k=1}^\infty x_k'$ in $X'$ with $\sup_{x''\in B_{X''}} \sum_{k=1}^\infty |x''(x_k')| \leq 1$ such that $\limsup_{k\to\infty} \sup_{n\in\N} |x_n''(x_k')|>\delta+\eps$. Since $(x_k')$ is a weakly null sequence, there are subsequences of $(y_n'')$ of $(x_n'')$ and $(y_k')$ of $(x_k')$ which for all $n\in\N$ satisfy $|y_n''(y_n')| > \delta+\eps$. The sequence $(y_n')$ is weakly null in $X'$ and $(y_n'')$ is a bounded sequence in $X''$, hence by Simons' extraction lemma \cite[Theorem 1]{simons} there is a~strictly increasing sequence of indices $(n_k)$ such that for all $k\in\N$
$$\sum_{\substack{m\in\N \\ m\not=k}} |y_{n_k}''(y_{n_m}')|<\eps.$$

Let us define
$$\alpha_k =
\begin{cases}
(-1)^k \sgn^{-1}\big(y_{n_k}''(y_{n_k}')\big), &y_{n_k}''(y_{n_k}')\not=0, \\
0, &y_{n_k}''(y_{n_k}') = 0,
\end{cases} \qquad
k\in\N,$$
where $\sgn$ denotes the complex signum function, i.e. $\sgn(z) = \frac{z}{|z|}$, $z\in\C\setminus\{0\}$.
Set $$x' = w^*\!\text{-}\!\lim_{N\to\infty} \sum_{k=1}^N \alpha_k y_{n_k}'\in X'.$$
Then $x'\in B_{X'}$ because for all $x\in B_X$
$$|x'(x)| = \left|\sum_{k=1}^\infty \alpha_k z_{n_k}'(x)\right| \leq \sum_{k=1}^\infty |z_{n_k}'(x)| \leq \sum_{n=1}^\infty |x_n'(x)| \leq \sup_{x''\in B_{X''}} \sum_{n=1}^\infty |x''(x_n')| \leq 1.$$

For each $k\in\N$ even
$$
\begin{aligned}
\operatorname{Re}y_{n_k}''(x') &= \alpha_k y_{n_k}''(y_{n_k}') + \operatorname{Re}\Bigg(\sum_{\substack{m\in\N \\ m\not=k/2}} y_{n_k}''(\alpha_{2m} y_{n_{2m}}')\Bigg) \\
&\qquad- \operatorname{Re}\left(\sum_{m\in\N} y_{n_k}''(\alpha_{2m-1} y_{n_{2m-1}}')\right) \\
&\geq |y_{n_k}''(y_{n_k}')| - \sum_{\substack{m\in\N \\ m\not=k/2}} |y_{n_k}''(y_{n_{2m}}')| - \sum_{m\in\N} |y_{n_k}''(y_{n_{2m-1}}')|  \\
&= |y_{n_k}''(y_{n_k}')| - \sum_{\substack{m\in\N \\ m\not=k}} |y_{n_k}''(y_{n_m}')| \\
&> (\delta+\eps)-\eps = \delta.
\end{aligned}
$$
Analogously, for each $k\in\N$ odd
$$
\begin{aligned}
\operatorname{Re}y_{n_k}''(x') &= \alpha_k y_{n_k}''(y_{n_k}') + \operatorname{Re}\left(\sum_{m\in\N} y_{n_k}''(\alpha_{2m} y_{n_{2m}}')\right) \\
&\qquad- \operatorname{Re}\Bigg(\sum_{\substack{m\in\N \\ m\not=(k+1)/2}} y_{n_k}''(\alpha_{2m-1} y_{n_{2m-1}}')\Bigg) \\
&\leq -|y_{n_k}''(y_{n_k}')| + \sum_{\substack{m\in\N \\ m\not=k}} |y_{n_k}''(y_{n_m}')| \\
&< -(\delta+\eps)+\eps = -\delta.
\end{aligned}
$$
Therefore
$$\inf_{n\in\N} \sup_{k,l\geq n} |y_{n_k}''(x') - y_{n_l}''(x')| \geq \inf_{n\in\N} \sup_{k,l\geq n} \big|\operatorname{Re}\big(y_{n_k}''(x') - y_{n_l}''(x')\big)\big| \geq 2\delta.$$
It follows that $\dws(x_n'') \geq \dws(y_{n_k}'') \geq 2\delta$. Since $\delta<\eta\big(\{x_n'':\, n\in\N\}\big)$ was chosen arbitrarily, we obtain the desired inequality.
\end{proof}

\begin{corollary}
\label{qv-qgroth}
Let $X$ be a Banach space and $C>0$. Suppose that each bounded $K\subset X''$ satisfy
\begin{equation}
\label{c-qv}
\wck_{X''}(K) \leq C\cdot \eta(K)
\end{equation}
(i.e. $X'$ enjoys the property \q{V}).
Then for every bounded sequence $(x_n'')$ in $X''$
$$\wck_{X''}\big(\{x_n'':\, n\in\N\}\big) \leq \tfrac{1}{2}C\cdot \dws(x_n'').$$
\end{corollary}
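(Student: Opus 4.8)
The plan is to combine the two results established just before Corollary~\ref{qv-qgroth}, namely the hypothesis \eqref{c-qv} (which is the property \q{V} for $X'$, expressed in terms of bounded subsets of $X''$) and Theorem~\ref{eta-dws} (which bounds $\eta$ of a countable set in $X''$ by half its weak${}^*$ non-Cauchyness). This is a two-line chaining argument once the right sets are identified.

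First I would fix a bounded sequence $(x_n'')$ in $X''$ and set $K=\{x_n'':\,n\in\N\}$, which is a bounded subset of $X''$. Applying \eqref{c-qv} to this particular $K$ gives
\begin{equation*}
\wck_{X''}\big(\{x_n'':\,n\in\N\}\big)=\wck_{X''}(K)\leq C\cdot\eta(K)=C\cdot\eta\big(\{x_n'':\,n\in\N\}\big).
\end{equation*}
Next I would invoke Theorem~\ref{eta-dws}, which says precisely $\eta\big(\{x_n'':\,n\in\N\}\big)\leq\tfrac12\dws(x_n'')$. Substituting this bound into the previous inequality yields
\begin{equation*}
\wck_{X''}\big(\{x_n'':\,n\in\N\}\big)\leq C\cdot\tfrac12\dws(x_n'')=\tfrac12 C\cdot\dws(x_n''),
\end{equation*}
which is exactly the claimed estimate. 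Since $(x_n'')$ was an arbitrary bounded sequence in $X''$, the proof is complete.

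There is essentially no obstacle here: the corollary is a formal consequence of the preceding theorem and the hypothesis, and the only thing to check is that the parenthetical identification ``\eqref{c-qv} $\Leftrightarrow$ $X'$ has \q{V}'' is legitimate — but this is just the definition of \q{V} applied to the Banach space $X'$, whose dual is $X''$ (using, if one wishes to be pedantic, that the characterization via bounded subsets of the bidual, stated in the excerpt for a general space, applies verbatim with $X$ replaced by $X'$). So the entire content of the corollary is transporting the quantitative property~(V) inequality along the chain $\wck_{X''}\le C\,\eta\le \tfrac{C}{2}\,\dws$, and the write-up can be kept to a few lines.
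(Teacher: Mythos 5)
Your proposal is correct and follows exactly the paper's argument: apply the hypothesis \eqref{c-qv} to $K=\{x_n'':\,n\in\N\}$ and then bound $\eta(K)$ by $\tfrac12\dws(x_n'')$ via Theorem \ref{eta-dws}. Nothing further is needed.
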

\begin{proof}
It suffices to combine the previous theorem with the inequality \eqref{c-qv} applied to $K=\{x_n'':\, n\in\N\}$.
\end{proof}

\begin{corollary}
Let $A$ be a von Neumann algebra. Then $A$ has a quantitative version of the Grothendieck property -- more precisely, for every bounded sequence $(x_n')$ in $A'$
$$\wck_{A'}\big(\{x_n':\, n\in\N\}\big) \leq \tfrac{1}{2}\pi\, \dws(x_n').$$
\end{corollary}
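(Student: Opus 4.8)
The plan is to recognise the statement as the instance of Corollary \ref{qv-qgroth} obtained by taking the underlying Banach space to be the (unique) predual $A_*$ of the von Neumann algebra $A$, so that essentially nothing new has to be proved.

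First I would record the duality bookkeeping. Since $A$ is a von Neumann algebra, $A=(A_*)'$, and hence $A'=(A_*)''$; thus a bounded sequence $(x_n')$ in $A'$ is exactly a bounded sequence in the bidual $X''$ of $X:=A_*$, and the weak$^*$ topology on $A'=X''$ that enters the definition of $\dws(x_n')$ is $\sigma(X'',X')=\sigma(A',A)$, which is precisely the topology used in Theorem \ref{eta-dws}. Next, $A$ is in particular a $C^*$-algebra, so Theorem \ref{qpf} applies and gives $\wck_{A'}(K)\leq \pi\cdot\eta(K)$ for every bounded $K\subset A'$. In the language of Corollary \ref{qv-qgroth} with $X=A_*$ this is exactly the hypothesis \eqref{c-qv} with constant $C=\pi$ (note that Theorem \ref{qpf} supplies this in the $\wck$-form, not merely in the $\gamma$-form built into the definition of \q{V}), so the corollary yields, for every bounded sequence $(x_n')$ in $A'=X''$,
$$\wck_{A'}\big(\{x_n':\, n\in\N\}\big) \leq \tfrac12\pi\cdot\dws(x_n'),$$
which is the assertion. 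Equivalently, one can bypass the corollary and combine its two ingredients by hand: Theorem \ref{eta-dws} for $X=A_*$ gives $\eta\big(\{x_n':\,n\in\N\}\big)\leq \tfrac12\dws(x_n')$, and substituting this into the inequality $\wck_{A'}\big(\{x_n':\,n\in\N\}\big)\leq \pi\cdot\eta\big(\{x_n':\,n\in\N\}\big)$ from Theorem \ref{qpf} produces the same bound.

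Since the proof merely assembles previously established results, there is no substantial obstacle; the only point demanding care is the duality bookkeeping indicated above, namely that ``$A$'' plays the role of $X'$ and therefore must carry the property \q{V} in the strong $\wck$-form, and that the weak$^*$ topology implicit in $\dws(x_n')$ is the one induced by the predual $A_*$, so that it matches the hypotheses of Theorem \ref{eta-dws}.
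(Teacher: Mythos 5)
Your proposal is correct and follows exactly the paper's route: the paper also deduces the statement by noting that $A$ is a $C^*$-algebra and a dual space, applying Theorem \ref{qpf} (with $C=\pi$) together with Corollary \ref{qv-qgroth} for $X=A_*$. Your extra care with the duality bookkeeping is fine but adds nothing beyond the paper's two-line argument.
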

\begin{proof}
Since every von Neumann algebra is a $C^*$-algebra and a dual Banach space, the assertion follows from Theorem \ref{qpf} and Corollary \ref{qv-qgroth}.
\end{proof}

%%%%%%%%%%%%%%%%%%%%%%%%%%%%%%%%%%%%%%%%%%%%%%%%%%%%%%%%%%%%%%%%%%%%%%%%%%%%%%%%
%%%%%%%%%%%%%%%%%%%%%%%%%%%%%%%%%%%%%%%%%%%%%%%%%%%%%%%%%%%%%%%%%%%%%%%%%%%%%%%%
\section*{Acknowledgement}
The author wishes to express her gratitude to the referee, who suggested how to improve Theorem \ref{eta-dws} and shorten its proof. She also acknowledges many suggestions and comments of her supervisor Ond\v rej Kalenda during the preparation of the paper.
%%%%%%%%%%%%%%%%%%%%%%%%%%%%%%%%%%%%%%%%%%%%%%%%%%%%%%%%%%%%%%%%%%%%%%%%%%%%%%%%
%%%%%%%%%%%%%%%%%%%%%%%%%%%%%%%%%%%%%%%%%%%%%%%%%%%%%%%%%%%%%%%%%%%%%%%%%%%%%%%%

\bibliography{c-star-alg-q-pel-corr}

\begin{thebibliography}{10}

\bibitem{ac}
C.~Angosto and B.~Cascales.
\newblock Measures of weak noncompactness in {B}anach spaces.
\newblock {\em Topology Appl.}, 156(7):1412--1421, 2009.

\bibitem{behrends}
Ehrhard Behrends.
\newblock New proofs of {R}osenthal's {$l^1$}-theorem and the
  {J}osefson-{N}issenzweig theorem.
\newblock {\em Bull. Polish Acad. Sci. Math.}, 43(4):283--295 (1996), 1995.

\bibitem{qgroth}
Hana Bendov{\'a}.
\newblock Quantitative {G}rothendieck property.
\newblock {\em J. Math. Anal. Appl.}, 412(2):1097--1104, 2014.

\bibitem{deblasi}
Francesco~S. De~Blasi.
\newblock On a property of the unit sphere in a {B}anach space.
\newblock {\em Bull. Math. Soc. Sci. Math. R. S. Roumanie (N.S.)},
  21(69)(3-4):259--262, 1977.

\bibitem{gasparis}
I.~Gasparis.
\newblock {$\epsilon$}-weak {C}auchy sequences and a quantitative version of
  {R}osenthal's {$\ell_1$}-theorem.
\newblock {\em J. Math. Anal. Appl.}, 434(2):1160--1165, 2016.

\bibitem{mideals}
P.~Harmand, D.~Werner, and W.~Werner.
\newblock {\em {$M$}-ideals in {B}anach spaces and {B}anach algebras}, volume
  1547 of {\em Lecture Notes in Mathematics}.
\newblock Springer-Verlag, Berlin, 1993.

\bibitem{kps}
O.~F.~K. Kalenda, H.~Pfitzner, and J.~Spurn{\'y}.
\newblock On quantification of weak sequential completeness.
\newblock {\em J. Funct. Anal.}, 260(10):2986--2996, 2011.

\bibitem{qpel}
Hana Kruli{\v s}ov{\'a}.
\newblock Quantification of {P}e{\l}czy{\'n}ski's property ({V}).
\newblock Preprint is available at http://arxiv.org/abs/1509.06610.

\bibitem{lechner}
Jind{\v r}ich Lechner.
\newblock 1-{G}rothendieck {$C(K)$} spaces.
\newblock Preprint is available at http://arxiv.org/abs/1511.02202.

\bibitem{pfitzner}
H.~Pfitzner.
\newblock Weak compactness in the dual of a {$C^\ast$}-algebra is determined
  commutatively.
\newblock {\em Math. Ann.}, 298(2):349--371, 1994.

\bibitem{rudin}
Walter Rudin.
\newblock {\em Real and complex analysis}.
\newblock McGraw-Hill Book Co., New York, third edition, 1987.

\bibitem{simons}
S.~Simons.
\newblock On the {D}unford-{P}ettis property and {B}anach spaces that contain
  {$c_0$}.
\newblock {\em Math. Ann.}, 216(3):225--231, 1975.

\bibitem{takesaki}
M.~Takesaki.
\newblock {\em Theory of operator algebras. {I}}, volume 124 of {\em
  Encyclopaedia of Mathematical Sciences}.
\newblock Springer-Verlag, Berlin, 2002.
\newblock Reprint of the first (1979) edition, Operator Algebras and
  Non-commutative Geometry, 5.

\end{thebibliography}
\bibliographystyle{plain}

\end{document}